\newtheorem{theorem}{Theorem}
\newtheorem{corollary}[theorem]{Corollary}
\newtheorem{proposition}[theorem]{Proposition}
\newenvironment{proof}[1][Proof]{\noindent\textbf{#1.} }{\ \rule{0.5em}{0.5em}}
\begin{document}

\title{Leavitt path algebras which are Zorn rings}
\author{Kulumani M. Rangaswamy\\Department of Mathematics, University of Colorado\\Colorado Springs, Colorado 80918, USA}
\date{}
\maketitle

\begin{abstract}
Let $E$ be an arbitrary graph and $K$ be any field. It is shown that the
Leavitt path algebra $L_{K}(E)$ is a Zorn ring if and only if the graph $E$
satisfies Condition (L). As a consequence, every homomorphic image of
$L_{K}(E)$ is a Zorn ring if and only if $L_{K}(E)$ is a weakly regular ring.
The corresponding statement for the graph C*-algebra $C^{\ast}(E)$ is also investigated.

\end{abstract}

\section{Introduction and Preliminaries}

The notion of a Leavitt path algebra of a graph $E$ over a field $K$ was
introduced in \cite{AA}, \cite{AMP} as an algebraic analogue of a graph
C$^{\ast}$-algebra and the study of its algebraic structure has been the
subject of a series of papers in recent years (see, for e.g., \cite{AA},
\cite{ABCR}, \cite{ARS}, \cite{AMMS}, \cite{APS}, \cite{T}). Two specific
properties of the graph $E$ that play an important role in the investigation
of the Leavitt path algebra $L_{K}(E)$ are Condition (K) and Condition (L) of
the graph $E$ (see their definition below). It has been shown that Condition
(K) on a graph $E$ is equivalent to a number of ring-theoretical properties of
the Leavitt path algebra $L_{K}(E)$ such as being an exchange ring, weakly
regular, or having every ideal graded etc. (see \cite{ARS}, \cite{AMMS},
\cite{APS}, \cite{G}, \cite{T} ). What about the weaker Condition (L) ?
\ Condition (L) of the graph $E$ plays an important role in the fundamental
Cuntz-Krieger uniqueness theorem for C*-algebras (see \cite{KPR}) and in the
corresponding uniqueness theorem for Leavitt path algebras. Here we
investigate the possible ring-theoretic properties of the Leavitt path algebra
$L_{K}(E)$ which are equivalent to the graph $E$ satisfying Condition (L).
Specifically, we show that the graph $E$ satisfies Condition (L) if and only
if $L_{K}(E)$ is a Zorn ring (see its definition below). We also obtain
several equivalent formulations of a Zorn ring. In general, a homomorphic
image of a Zorn ring need not be a Zorn ring. As an easy consequence of our
main theorem, we show that every homomorphic image of $L_{K}(E)$ is a Zorn
ring if and only if $L_{K}(E)$ is a weakly regular ring.

It is an interesting fact that often, for a given graph $E$, the Leavitt path
algebra $L_{K}(E)$ has a certain algebraic property ( such as being a simple
ring, a purely inifinite simple ring, an exchange rings etc.,) if and only the
corresponding graph C*-algebra $C^{\ast}(E)$ has the same property ( see e.g.
\cite{AA}, \cite{APS}, \cite{T}, \cite{BPRS}, \cite{KPR}). But the methods to
establish these properties are usually independent of each other, with
algebraic arguments for $L_{K}(E)$ and independent analytical arguments for
$C^{\ast}(E)$. In line with these investigations, we combine our algebraic
methods with analytical arguments to show that if the C*-algebra $C^{\ast}(E)$
for graph $E$ is a Zorn ring, then necessarily $E$ satisfies Condition (L).
Conversely, if $E$ satisfies Condition (L) then every one-sided open ideal of
$C^{\ast}(E)$ is shown to contain an idempotent.

A ring $R$ is called a \textit{Zorn ring} if given an element $a\in R$, either
$a$ is nilpotent or there is an element $b\in R$ such that $ab$ is a non-zero
idempotent. In this case, $baba$ is also a non-zero idempotent. So this
definition is right/left symmetric. Also, since $abab$ is also an idempotent,
we see that $b$ can be chosen so that both $ab$ and $ba$ are idempotents. As
we shall see later, there are other reformulations of the definition of a Zorn ring.

The concept of a Zorn ring was introduced for alternate rings by M. Zorn
\cite{Z}. I. Kaplansky \cite{Kap2} named the rings with this property Zorn
rings, as did N. Bourbaki \cite{B}. Associative Zorn rings have had several
incarnations: In \cite{L} and \cite{N} , they were studied under the name
\textit{I-rings}. In \cite{Kap1}, Kaplansky renamed them weakly regular rings.
But during the last forty years, weakly regular rings have been named for
rings $R$ with a different property, namely, $I^{2}=I$ for every one-sided
ideal $I$ of $R$ (see, for instance, \cite{VSR}, \cite{ARS}). So we prefer to
use the name Zorn rings for these rings.

Zorn rings are among the class of rings with a large supply of idempotents
such as von Neumann regular rings. If a ring $R$ is von Neumann regular, then
it is always a Zorn ring, but the converse is not true as is clear by
considering the ring $R=%
\mathbb{Z}
/p^{n}%
\mathbb{Z}
$ of intergers modulo $p^{n}$ where $p$ is a prime and $n\geq2$. Leavitt path
algebras which are von Neumann regular or $\pi$-regular have been
characterized in \cite{AR}.

All the graphs $E$ that we consider here are arbitrary in the sense that no
restriction is placed either on the number of vertices in $E$ (such as being a
countable graph) or on the number of edges emitted by any vertex (such as
being a row-finite graph). \ We shall follow \cite{AA}, \cite{T} for the
general notation, terminology and results. For the sake of completeness, we
shall outline some of the concepts and results that we will be using.

A (directed) graph $E=(E^{0},E^{1},r,s)$ consists of two sets $E^{0}$ and
$E^{1}$ together with maps $r,s:E^{1}\rightarrow E^{0}$. The elements of
$E^{0}$ are called \textit{vertices} and the elements of $E^{1}$
\textit{edges}.

A vertex $v$ is called a \textit{regular vertex} if $s^{-1}(v)$ is a finite
non-empty set. A path $\mu$ in a graph $E$ is a finite sequence of edges
$\mu=e_{1}\dots e_{n}$ such that $r(e_{i})=s(e_{i+1})$ for $i=1,\dots,n-1$.
Such a path $\mu$ is \textit{closed} if $r(e_{n})=s(e_{1})$, in which case
$\mu$ is said to be\textit{\ based at} the vertex $s(e_{1})$. A closed path
$\mu$ as above is called \textit{simple} provided it does not pass through its
base more than once, i.e., $s(e_{i})\neq s(e_{1})$ for all $i=2,...,n$. The
closed path $\mu$ is called a \textit{cycle} if it does not pass through any
of its vertices twice, that is, if $s(e_{i})\neq s(e_{j})$ for every $i\neq
j$. An \textit{exit }for a path $\mu=e_{1}\dots e_{n}$ is an edge $e$ such
that $s(e)=s(e_{i})$ for some $i$ and $e\neq e_{i}$.

We say that a graph $E$ satisfies \textit{Condition }(L) if every simple
closed path in $E$ has an exit. $E$ is said to satisfy \textit{Condition
}(K\textit{)} provided no vertex $v\in E^{0}$ is the base of precisely one
simple closed path, i.e., either no simple closed path is based at $v$, or at
least two are based at $v$. Condition (K) on a graph $E$ always implies
Condition (L), but not coversely.

For each $e\in E^{1}$, we call $e^{\ast}$ a ghost edge. We let $r(e^{\ast}) $
denote $s(e)$, and we let $s(e^{\ast})$ denote $r(e)$.

Given an arbitrary graph $E$ and a field $K$, the \textit{Leavitt path }%
$K$\textit{-algebra }$L_{K}(E)$ is defined to be the $K$-algebra generated by
a set $\{v:v\in E^{0}\}$ of pairwise orthogonal idempotents together with a
set of variables $\{e,e^{\ast}:e\in E^{1}\}$ which satisfy the following conditions:

(1) $s(e)e=e=er(e)$ for all $e\in E^{1}$.

(2) $r(e)e^{\ast}=e^{\ast}=e^{\ast}s(e)$\ for all $e\in E^{1}$.

(3) For all $e,f\in E^{1}$, $e^{\ast}e=r(e)$ and $e^{\ast}f=0$ if $e\neq f$.

(4) For every regular vertex $v\in E^{0}$,
\[
v=\sum_{e\in E^{1},s(e)=v}ee^{\ast}.
\]

If $\mu=e_{1}\dots e_{n}$ is a path in $E$, we denote by $\mu^{\ast}$ the
element $e_{n}^{\ast}\dots e_{1}^{\ast}$ of $L_{K}(E)$.

A useful observation is that every element $a$ of $L_{K}(E)$ can be written as
$a=%
{\textstyle\sum\limits_{i=1}^{n}}
k_{i}\alpha_{i}\beta_{i}^{\ast}$, where $k_{i}\in K$, $\alpha_{i},\beta_{i}$
are paths in $E$ and $n$ is a suitable integer (see \cite{AA}).

\section{Leavitt path algebras and C*-algebras which are Zorn rings}

The next theorem describes the Leavitt path algebras which are Zorn rings. The
equivalence of conditions (ii) and (iii) of Theorem 1 is known \cite{Kap1}. We
include that proof for the sake of completeness.

\begin{theorem}
Let $E$ be an arbitrary graph and $K$ be any field. Then the following
properties are equivalent:

(i) \ $L_{K}(E)$ is a Zorn ring.

(ii) Every non-zero right/left ideal of $L_{K}(E)$ of contains a non-zero idempotent.

(iii) For every non-zero $a\in L_{K}(E)$, there is a non-zero $b\in L_{K}(E)$
such that $bab=b$.

(iv) The graph $E$ satisfies Condition (L).
\end{theorem}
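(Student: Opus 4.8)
The plan is to establish the cycle of implications $(\mathrm{iv}) \Rightarrow (\mathrm{iii}) \Rightarrow (\mathrm{ii}) \Rightarrow (\mathrm{i}) \Rightarrow (\mathrm{iv})$, noting that the three ``soft'' links are formal ring theory and that the whole weight of the theorem rests on two facts about $L_K(E)$. For $(\mathrm{iii}) \Rightarrow (\mathrm{ii})$: if $I$ is a non-zero right (resp.\ left) ideal, pick $0 \neq a \in I$ and $0 \neq b$ with $bab = b$; then $ab$ (resp.\ $ba$) is a non-zero idempotent lying in $aL_K(E) \subseteq I$ (resp.\ $L_K(E)a \subseteq I$). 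For $(\mathrm{ii}) \Rightarrow (\mathrm{i})$: since $L_K(E)$ has local units, any $0 \neq a$ has $aL_K(E) \neq 0$, and by $(\mathrm{ii})$ this non-zero right ideal contains a non-zero idempotent, necessarily of the form $ab$ --- exactly the alternative in the definition of a Zorn ring. Together with the trivial converses this also recovers the known equivalence $(\mathrm{ii}) \Leftrightarrow (\mathrm{iii})$ of \cite{Kap1}, whose short proof (for $0 \neq a$, take a non-zero idempotent $ab_0 \in aL_K(E)$ and put $b = b_0 a b_0$, so that $ab = ab_0 \neq 0$ and $bab = b$ by a one-line computation) I would include for completeness.

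For $(\mathrm{i}) \Rightarrow (\mathrm{iv})$ I argue by contraposition. If $E$ fails Condition (L) it contains a simple closed path without an exit; since then every vertex on that path emits exactly one edge, a short argument (tracking where repeated edges would force the path to meet its base again) shows the path cannot revisit a vertex, i.e.\ it may be taken to be a cycle $c$ without exits, based at some vertex $v$. It is known (see, e.g., \cite{AA}, \cite{AR}) that the two-sided ideal of $L_K(E)$ generated by $v$ is then isomorphic to a matrix ring $M_\Lambda(K[x,x^{-1}])$ over the Laurent polynomial ring, for a suitable index set $\Lambda$ of paths into the cycle. This ring is not a Zorn ring: $a = (x-1)E_{11}$ is not nilpotent, yet if $ab$ were a non-zero idempotent $f$ then $f$ is supported in its first row, a short matrix computation gives $f = f_{11}f$, so $f_{11}$ is an idempotent of the domain $K[x,x^{-1}]$, hence $0$ (giving $f = 0$) or $1$ (giving $(x-1)b_{11} = 1$) --- both absurd. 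Finally, being a Zorn ring passes to two-sided ideals: if $R$ is Zorn, $J \trianglelefteq R$, and $a \in J$ is non-nilpotent, then for $b \in R$ with $ab$ a non-zero idempotent the element $b' := bab \in J$ satisfies $ab' = ab$. Hence a Zorn ring cannot contain $M_\Lambda(K[x,x^{-1}])$ as an ideal, so $(\mathrm{i})$ fails.

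The core is $(\mathrm{iv}) \Rightarrow (\mathrm{iii})$, which I deduce from the reduction lemma underlying the Cuntz--Krieger uniqueness theorem for Leavitt path algebras (see, e.g., \cite{AA}, \cite{T}): if $E$ satisfies Condition (L), then for every $0 \neq a \in L_K(E)$ there exist paths $\mu,\nu$, a vertex $v$ and a scalar $0 \neq k \in K$ with $\mu^* a \nu = kv$. Granting this, $\mu^* a \nu = kv \neq 0$ forces $r(\mu) = r(\nu) = v$, so $\nu\mu^*$ is a non-zero monomial; putting $b := k^{-1}\nu\mu^*$ we get $b \neq 0$ and $bab = k^{-2}\nu(\mu^* a\nu)\mu^* = k^{-2}\nu(kv)\mu^* = k^{-1}\nu\mu^* = b$, which is $(\mathrm{iii})$. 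To prove the reduction lemma one first multiplies $a$ on both sides by vertices to isolate a non-zero block in which all ``real'' factors share one source and all ``ghost'' factors share one source; multiplying on the right by a longest $\beta_i$ turns $a$ into a $K$-combination of genuine paths, and multiplying on the left by the ghost of a shortest such path, then on the right by its range vertex, leaves an element of the form $k_1 v + \sum_j k_j' c_j$ with $k_1 \neq 0$ and the $c_j$ non-trivial closed paths based at $v$. The last step --- killing the terms $c_j$ --- is where Condition (L) is indispensable and where I expect the main obstacle: any cycle through $v$ must have an exit $e$, and left-multiplication by $e^*$ annihilates the closed-path terms beginning with the first edge of that cycle while fixing $v$, so an induction on the lengths (or on the set of leading edges) of the surviving $c_j$ drives the element down to a scalar multiple of a single vertex. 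This combinatorial bookkeeping is the technical heart of the theorem; once it is in place the chain above closes and all four conditions are equivalent.
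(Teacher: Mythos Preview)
Your argument is essentially correct and closes the same cycle of implications as the paper, but with two genuine differences worth noting. First, for the direction ``not (L) $\Rightarrow$ not Zorn'' (your $(\mathrm{i})\Rightarrow(\mathrm{iv})$), you pass to the two-sided \emph{ideal} generated by a vertex on an exit-free cycle, invoke the description of that ideal as $M_\Lambda(K[x,x^{-1}])$, give a matrix computation to see this fails the Zorn property, and check that being Zorn descends to ideals. The paper instead passes to the \emph{corner} $vL_K(E)v\cong K[x,x^{-1}]$: one checks in two lines that condition~(iii) is inherited by corners, and then that in the integral domain $K[x,x^{-1}]$ the equation $bab=b$ with $b\neq 0$ forces $ab=1$, which is impossible for $a=1-x$. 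Your route works, but the corner argument is shorter and avoids both the heavier structural result on ideals and the matrix bookkeeping. Second, for the key implication using Condition~(L), both you and the paper rely on the same reduction lemma (Proposition~3.1 of \cite{AMMS}); the paper simply cites it to produce $x,y$ with $xay=v$ and then observes $a(yx)$ is a non-zero idempotent (their $(\mathrm{iv})\Rightarrow(\mathrm{i})$), while your $(\mathrm{iv})\Rightarrow(\mathrm{iii})$ with $b=k^{-1}\nu\mu^*$ is the same move in slightly different packaging.

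One small caution: in your sketch of the reduction lemma, the sentence ``left-multiplication by $e^*$ \ldots\ fix[es] $v$'' is not literally right, since $e^*v=e^*$ rather than $v$; the actual elimination of the closed-path terms is a bit more delicate (one typically conjugates along a path to the exit and then uses $e^*$, or builds a divergent path). Since you are ultimately citing the lemma rather than reproving it, this is a cosmetic slip rather than a gap, but if you intend to include a self-contained proof you should revisit that step.
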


\begin{proof}
Assume (i). Let $I$ be a non-zero right ideal of $L_{K}(E)$. Since the
Jacobson radical $J(L_{K}(E))=0$, $I$ is not a nil ideal. Let $a\in I$ be an
element which is not nilpotent. By hypothesis, there is a $b\in L_{K}(E)$ such
that $ab$ is a non-zero idempotent. Clearly $ab\in I$. A similar arguments
works if $I$ is a left ideal. This proves (ii).

Assume (ii). Let $0\neq a\in L_{K}(E)$. By hypothesis, $aR$ contains a
non-zero idempotent $ax$ for some $x\in R$. Then $ax=axaxax$. Multiplying \ by
$x$ on the left, we get $xax=(xax)a(xax)$. Clearly $xax\neq0$ since $xax=0$
implies $ax=axax=a(xax)=0$, a contradiction. A similar argument holds for the
left ideal $Ra$. This proves (iii).

Assume (iii). First observe that, for any idempotent $u$, the corner
$uL_{K}(E)u$ also satisfies condition (iii). To see this, let $a=uau\in
uL_{K}(E)u$. By hypothesis, there is a $b\in L_{K}(E)$ so that $buaub=b$.
Multiplying by $u$ on both sides, we get ($ubu)uau(ubu)=ubu$, thus proving the
desired conclusion. We \ wish to show that the graph $E$ satisfies Condition
(L). Suppose, by way of contradiction, there exists a cycle $c$ without exits
based at a vertex $v$ in $E$. As shown in Lemma 1.5 of \cite{AMMS}, an
examination of the elements of $vL_{K}(E)v$ leads to an isomorphism
$\varphi:vL_{K}(E)v\longrightarrow K[x,x^{-1}]$ mapping $v$ to $1$, $c$ to $x$
and $c^{\ast}$ to $x^{-1}$. We then obtain a contradiction, because while the
corner $vL_{K}(E)v$ satisfies condition (iii), $K[x,x^{-1}] $ does not, due to
the fact that for any $a\neq0$ in the integral domain $K[x,x^{-1}]$, $bab=b$
for some non-zero $b$ implies that $ab=1$, a contradiction. Hence $E$
satisfies Condition (L). This proves (iv).

Assume (iv). Let $a$ be a non-zero element of $L_{K}(E)$. By Proposition 3.1
of \cite{AMMS}, there are elements $x,y\in L_{K}(E)$ such that $xay=v$, a
vertex in $E$. Multiplying by $v$ on both sides, we may assume that $vx=x$ and
$yv=y$. Then the element $a(yx)$ is an idempotent since $ayxayx=ayvx=ayx$.
Also $ayx\neq0$ since $ayx=0$ implies that $0=xayx=vx=x$ and this will then
imply that $v=xay=0$, a contradiction. Hence $L_{K}(E)$ is Zorn ring, thus
proving (i).
\end{proof}

REMARK: (i) The proof of (iv) =%
$>$
(i) shows that if a Leavitt path algebra $L_{K}(E)$ is a Zorn ring, it
actually satisfies a stronger condition: For \textit{every }non-zero element
$a$ in $L_{K}(E)$ (even if $a$ is nilpotent), there exists an element $b$ such
that $ab\neq0$ is an idempotent. This is to be expected since the Jacobson
radical $J(L_{K}(E))=0$ and so $aL_{K}(E)$ is not a nil right ideal.

(ii) It is also clear from the proof of Theorem 1 that $L_{K}(E)$ is a Zorn
ring if and only if every corner $eL_{K}(E)e$ is a Zorn ring, the converse
following from the fact that, in a Leavitt path algebra, every element $a$
belongs some corner $eL_{K}(E)e$ where the idempotent $e$ is a finite sum of
suitable finitely many vertices.

\bigskip

In general, a primitive ring or even a simple ring with identity need not be a
Zorn ring: Let $R$ be a simple ring without zero divisors which is not a
division ring (For instance, consider the example of J. Cozzens \cite{C} of a
simple right/left principal ideal ring with 1 and without zero divisors but
not a division ring). This ring is not a Zorn ring as $R$ cannot have
idempotents other than $0,1$. But for a Leavitt path algebra, the implication
holds. Because, it was shown in \cite{ABR} that if a Leavitt path algebra
$L_{K}(E)$ is a primitive ring, then necessarily the graph $E$ must satisfy
Condition (L). So from Theorem 1 we get the following corollary.

\begin{corollary}
If a Leavitt path algebra $L_{K}(E)$ is a primitive ring, then $L_{K}(E)$ is a
Zorn ring.
\end{corollary}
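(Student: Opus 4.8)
The plan is to derive the corollary as an immediate consequence of Theorem 1 together with the cited result on primitive Leavitt path algebras. Specifically, I would first recall that it was shown in \cite{ABR} that if $L_{K}(E)$ is a primitive ring, then the graph $E$ necessarily satisfies Condition (L). This is the only external input needed; the rest is a direct appeal to the theorem just proved.

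The argument then runs as follows. Suppose $L_{K}(E)$ is a primitive ring. By the result from \cite{ABR}, the graph $E$ satisfies Condition (L), that is, statement (iv) of Theorem 1 holds. Since Theorem 1 establishes the equivalence of (i)--(iv), we conclude that statement (i) holds, namely that $L_{K}(E)$ is a Zorn ring. This completes the proof.

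There is essentially no obstacle here: the work has already been done in proving Theorem 1 and in the cited paper \cite{ABR}. The only point worth a sentence of commentary — which the surrounding text already supplies — is that this implication is special to Leavitt path algebras and fails for general primitive (or even simple) rings, as the Cozzens example shows; so the corollary is genuinely using the structure theory of $L_{K}(E)$ and not any general ring-theoretic principle. I would keep the proof to a single short paragraph, citing \cite{ABR} for Condition (L) and Theorem 1 for the equivalence, and perhaps noting in passing that the same reasoning shows a primitive Leavitt path algebra in fact satisfies the stronger idempotent-lifting property recorded in the Remark following Theorem 1.

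\begin{proof}
Suppose $L_{K}(E)$ is a primitive ring. By \cite{ABR}, the graph $E$ then satisfies Condition (L), i.e., condition (iv) of Theorem 1 holds. Since conditions (i) and (iv) of Theorem 1 are equivalent, it follows that $L_{K}(E)$ is a Zorn ring.
\end{proof}
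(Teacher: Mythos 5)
Your proof is correct and matches the paper's own argument exactly: the paper derives the corollary by citing \cite{ABR} for the fact that primitivity of $L_{K}(E)$ forces Condition (L) on $E$, and then invoking the equivalence (iv) $\Leftrightarrow$ (i) of Theorem 1. Nothing further is needed.
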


A homomorphic image of a Zorn ring $R$ need not, in general, be a Zorn ring
even if $R$ is a Leavitt path algebra. For example, let $R=L_{K}(E)$ be the
Toeplitz algebra (where $E$ is the graph with a single loop $c$ based at a
vertex $v$ with an exit $e$ at $v)$. Since $E$ satisfies Condition (L),
Theorem 1 implies that $R$ is a Zorn ring. But if $I$ is the ideal generated
by $e$, then $R/I$ is not a Zorn ring since $R/I\cong L_{K}(F)$ where
$F=E\backslash\{e\}$ is the graph with a single loop $c$ based at a vertex $v
$ and obviously does not satisfy Condition (L). The next Proposition describes
when every homomorphic image of $L_{K}(E)$ will be a Zorn ring.

First, we need some preliminaries on graphs: A subset $H$ of $E^{0}$ is called
\textit{hereditary} if whenever $v\in H$ and there is a path from $v$ to a
vertex $w$, then $w\in H$. A hereditary set is \textit{saturated} if, for any
regular vertex $v$, $r(s^{-1}(v))\subseteq H$ implies $v\in H$. Let $H$ be a
hereditary saturated subset of $E^{0}$. A vertex $w$ is called a
\textit{breaking vertex }of $H$ if $w\in E^{0}\backslash H$ is an infinite
emitter with the property that $1\leq|s^{-1}(w)\cap r^{-1}(E^{0}\backslash
H)|<\infty$. The set of all breaking vertices of $H$ is denoted by $B_{H}$.
For any $v\in B_{H}$, $v^{H}$ denotes the element $v-\sum_{s(e)=v,r(e)\notin
H}ee^{\ast}$. Given a hereditary saturated subset $H$ and a subset $S\subseteq
B_{H}$, $(H,S)$ is called an \textit{admissible pair }and $I_{(H,S)}$ denotes
the ideal generated by $H\cup\{v^{H}:v\in S\}$. Given an admissible pair
$(H,S)$, the corresponding \textit{quotient graph} $E\backslash(H,S)$ is
defined as follows:%
\begin{align*}
(E\backslash(H,S))^{0} &  =(E^{0}\backslash H)\cup\{v^{\prime}:v\in
B_{H}\backslash S\};\\
(E\backslash(H,S))^{1} &  =\{e\in E^{1}:r(e)\notin H\}\cup\{e^{\prime}:e\in
E^{1},r(e)\in B_{H}\backslash S\}.
\end{align*}
Further, $r$ and $s$ are extended to $E\backslash(H,S)$ by setting
$s(e^{\prime})=s(e)$ and $r(e^{\prime})=r(e)^{\prime}$. Theorem 5.7 of
\cite{T} (see also \cite{ABCR}) states that, given an admissible pair $(H,S)$,
$L_{K}(E)/I_{(H,S)}\cong L_{K}(E\backslash(H,S))$

Recall that a ring $R$ is said to be \textit{weakly regular} if $I^{2}=I$ for
every one sided ideal $I$ of $R$, equivalently, each element $a$ in $R$
satisfies $a\in aRaR$ ($a\in RaRa$). Note that being weak regular and being a
Zorn ring are independent properties as justified by the Toeplitz algebra and
the Cozzen's example. However, for Leavitt path algebras, we have an
interesting relationship between them, as indicated in the following Proposition.

\begin{proposition}
Let $E$ be an arbitrary graph and $K$ be any field. Then every homomorphic
image of the Leavitt path algebra $L_{K}(E)$ is a Zorn ring if and only if
$L_{K}(E)$ is a weakly regular ring.
\end{proposition}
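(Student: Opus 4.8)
The plan is to exploit the structure theory for quotients of Leavitt path algebras recalled just above: by the cited Theorem 5.7 of \cite{T}, every homomorphic image of $L_{K}(E)$ is, up to isomorphism, of the form $L_{K}(E\backslash(H,S))$ for some admissible pair $(H,S)$. Combining this with Theorem 1, the statement ``every homomorphic image of $L_{K}(E)$ is a Zorn ring'' becomes ``$E\backslash(H,S)$ satisfies Condition (L) for every admissible pair $(H,S)$.'' On the other hand, it is known (see \cite{APS}, \cite{T}) that $L_{K}(E)$ is weakly regular if and only if $E$ satisfies Condition (K). So the Proposition reduces to a purely graph-theoretic equivalence: $E$ satisfies Condition (K) if and only if every quotient graph $E\backslash(H,S)$ satisfies Condition (L).

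First I would prove the forward direction of this graph equivalence. Assuming $E$ satisfies Condition (K), I need every $E\backslash(H,S)$ to satisfy Condition (L). The key observation is that Condition (K) is inherited by quotient graphs: a simple closed path in $E\backslash(H,S)$ lifts (by reading off the underlying edges, ignoring the primed decorations on edges whose range lies in $B_H\backslash S$, which in any case cannot occur inside a closed path since primed vertices emit no edges) to a closed path in $E$ whose vertices all lie in $E^{0}\backslash H$. Condition (K) at its base vertex supplies a second simple closed path through that vertex in $E$; hereditariness of $H$ guarantees that this second path also avoids $H$, hence descends to $E\backslash(H,S)$, giving an exit for the original path. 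Thus $E\backslash(H,S)$ satisfies Condition (K), a fortiori Condition (L).

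For the converse I would argue contrapositively: if $E$ fails Condition (K), then some vertex $v$ is the base of exactly one simple closed path $c$, and I want to produce an admissible pair $(H,S)$ for which $E\backslash(H,S)$ fails Condition (L)—i.e. has a cycle without exits. The natural candidate is to take $H$ to be the hereditary saturated closure of the set of all vertices $w$ such that $w$ does \emph{not} connect to $v$ together with enough extra vertices to kill every exit of $c$; more precisely, since $c$ has no ``second'' simple closed path at $v$, the only edges that could serve as exits for $c$ in the quotient are those leaving vertices of $c$ to vertices not on $c$, and one passes to the hereditary saturated subset $H$ generated by $\{r(e): s(e)\in c^{0},\ e\notin c^{1}\}$ (handling infinite emitters on $c$ via a suitable choice of $S\subseteq B_H$). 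One checks $v\notin H$—this uses precisely that $c$ is the unique simple closed path at $v$, so no saturation step forces $v$ into $H$—and that in $E\backslash(H,S)$ the image of $c$ is a cycle with no exit. Then $L_{K}(E\backslash(H,S))\cong L_{K}(E)/I_{(H,S)}$ is a homomorphic image that is not a Zorn ring by Theorem 1.

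The main obstacle I anticipate is the bookkeeping in this last construction: verifying that the chosen $H$ is genuinely saturated, that $v$ survives into the quotient (the saturation process is where uniqueness of the simple closed path at $v$ must be used, and one must be careful that no chain of forced inclusions sneaks $v$ into $H$), and handling breaking vertices correctly so that $(H,S)$ is admissible and the image of $c$ really has no exit in $E\backslash(H,S)$. A cleaner route, if available, is to cite directly the known equivalence between Condition (K) and Condition (L) holding in all quotient graphs (this is essentially contained in the characterizations of Condition (K) via graded ideals in \cite{T}, \cite{APS}), in which case the Proposition follows immediately by stringing together Theorem 5.7 of \cite{T}, Theorem 1 above, and that known equivalence, and no new graph-theoretic work is needed.
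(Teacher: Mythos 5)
Your overall strategy coincides with the paper's: translate everything into graph conditions via Theorem 1, use the equivalence of weak regularity of $L_{K}(E)$ with Condition (K) on $E$ (Theorem 3.15 of \cite{ARS}), and invoke the graph-theoretic fact that $E$ satisfies Condition (K) if and only if every quotient graph $E\backslash(H,S)$ satisfies Condition (L) --- this is precisely Proposition 6.12 of \cite{T}, so the ``cleaner route'' you mention at the end is available and is exactly what the paper does. Your from-scratch construction of a bad admissible pair is therefore unnecessary, and as you concede it is not actually carried out (the verification that $v$ survives the saturation of $H$ is the entire content of that direction).

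There is, however, a genuine error in your opening reduction: Theorem 5.7 of \cite{T} does \emph{not} say that every homomorphic image of $L_{K}(E)$ is of the form $L_{K}(E\backslash(H,S))$; it identifies only the quotients by the \emph{graded} ideals $I_{(H,S)}$. In general $L_{K}(E)$ has non-graded ideals whose quotients are not Leavitt path algebras of quotient graphs: for the single-loop graph $E$ one has $L_{K}(E)\cong K[x,x^{-1}]$, whose quotient by the ideal generated by $x-1$ is $K$, while the only quotient graphs of $E$ are $E$ itself and the empty graph. Hence your reduction of ``every homomorphic image is a Zorn ring'' to ``every $E\backslash(H,S)$ satisfies Condition (L)'' is valid only in one direction; it suffices for the ``only if'' half of the Proposition (where one may restrict attention to the graded quotients), but in the ``if'' half the condition on quotient graphs says nothing a priori about quotients by non-graded ideals. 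The missing step is that Condition (K) forces \emph{every} ideal of $L_{K}(E)$ to be graded and hence of the form $I_{(H,S)}$ (part of the standard characterizations of Condition (K) in \cite{T}, \cite{ARS}); only then is every homomorphic image of the form $L_{K}(E\backslash(H,S))$ and covered by your argument. With that insertion your proof closes up and matches the paper's, which disposes of this direction by noting that homomorphic images of weakly regular rings are weakly regular, together with the same implicit appeal to the graded ideal structure under Condition (K).
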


\begin{proof}
Suppose every homomorphic image of $L_{K}(E)$ is a Zorn ring. By Theorem 1,
$E$ satisfies Condition (L). Now, for any hereditary saturated subset $H$ of
$E^{0}$ and any subset $S$ of $B_{H}$, $L_{K}(E\backslash(H,S))\cong
L_{K}(E)/I_{(H,S)}$, by Theorem 5.7 of \cite{T}. Hence $L_{K}(E\backslash
(H,S))$ is a Zorn ring and this implies, by Theorem 1, that the graph
$E\backslash(H,S))$ satisfies Condition (L). Since this happens for every
admissible pair $(H,S)$, we appeal to Proposition 6.12 of \cite{T} to conlude
that $E$ satisfies Condition (K). Then, by Theorem 3.15 of \cite{ARS},
$L_{K}(E)$ is weakly regular.

Conversely, suppose $L_{K}(E)$ is weakly regular. Now the weak regularity of
$L_{K}(E)$ implies, by Theorem 3.15 of \cite{ARS}, that the graph $E$
satisfies Condition (K). Since Condition (K) implies Condition (L), we
conclude by Theorem 1 that $L_{K}(E)$ is a Zorn ring. Since a homomorphic
image of a weakly regular ring is again weakly regular, we conclude that every
homomorphic image of $L_{K}(E)$ is a Zorn ring.
\end{proof}

\bigskip

REMARK: The above Proposition does not hold for arbitrary rings. For instance,
let $R=%
\mathbb{Z}
/p^{n}%
\mathbb{Z}
$, the ring of integers modulo $p^{n}$ where $p$ is a prime and $n$ is an
integer $\geq2$. Then $R\supset pR\supset......\supset p^{n-1}R\supset
p^{n}R=\{0\}$ are all the ideals of $R$ and, for each $i$ with $1\leq i\leq n
$, $R/p^{i}R$ is a Zorn ring since each element in it is either nilpotent or a
unit. But $R$ is not weakly regular since for the ideal $I=pR$, $I^{2}\neq I$.

GRAPH C*-ALGEBRAS: Next we show that, for any graph $E$, if the C*-algebra
$C^{\ast}(E)$ is a Zorn ring, then the graph $E$ satisfies Condition (L). For
the notation and terminology used in graph C*-algebras see \cite{RAE}.

In Theorem 1 above, the algebraic conditions (i) and (iii) are shown to be
equivalent for any ring. It is easily seen that this is valid for $C^{\ast
}(E)$ also. We shall use the fact that there is a $^{\ast}$-monomorphism
$\varphi:L_{%
\mathbb{C}
}(E)\longrightarrow C^{\ast}(E)$ such that $\varphi(v)=p_{v}$, $\varphi
(\alpha\beta^{\ast})=s_{\alpha}s_{\beta^{\ast}}$ for all paths $\alpha,\beta$
in $E$ and that $L_{%
\mathbb{C}
}(E)\longrightarrow\varphi(L_{%
\mathbb{C}
}(E))=A$ is actually a graded isomorphism. Here $%
\mathbb{C}
$ is the field of complex numbers. Moreover $C^{\ast}(E)=\bar{A}$, the
completion of $A$ under the induced norm.

Suppose $C^{\ast}(E)$ is a Zorn ring. We claim that $E$ satisfies Condition
(L). Suppose, on the contrary, there is a cycle $c$ without exits based at a
vertex $v$ in $E$. As noted in the proof of Theorem 1, $vL_{%
\mathbb{C}
}(E)v\cong%
\mathbb{C}
\lbrack x,x^{-1}]$. Since multiplication is continuous, $p_{v}C^{\ast}%
(E)p_{v}=p_{v}\bar{A}p_{v}=\overline{p_{v}Ap_{v}}\cong\overline{vL_{%
\mathbb{C}
}(E)v}\cong\overline{%
\mathbb{C}
\lbrack x,x^{-1}]}\cong%
\mathbb{C}
(T)$, the algebra of complex valued continuous functions on $T$, where $T$ is
the unit circle consisting of complex numbers with modulus 1. (The last
isomorphism is well-known among C*-algebraists and Mark Tomforde writes that
it may also be obtained by combining Lemma 2.4 of \cite{aHR} with Theorem
4.1(c) of \cite{BPRS}). The composite of the above isomorphisms maps $p_{v}$to
$1$, $s_{c}$ to the identity function $z$ and $c^{\ast}$ to $z^{-1}$. Our
hypothesis implies that corresponding to the element $p_{v}-s_{c}$, there is
an element $b\in C^{\ast}(E)$ such that $b(p_{v}-s_{c})b=b$. Multiplying by
$p_{v}$ on both sides, we may assume that $b=p_{v}bp_{v}$ so that both
$(p_{v}-s_{c}),b\in p_{v}C^{\ast}(E)p_{v}$. In $%
\mathbb{C}
(T)$, the equation $b(p_{v}-s_{c})b=b$ becomes $(1-z)f^{2}=f$ where $f\in%
\mathbb{C}
(T)$ is some continuous function which is the image of $b$ under the given
isomorphism. This leads to a contradiction for the following reasons:

If $f(z)\neq0$ for all $z\in T$, then $f(z)=1/(1-z)$ for all $z\in T$ , but
this is not possible since $f$ is continuous.

So $f(z)=0$ for at least one $z\in T$. Consider the function $|f|$ which is
continuous since $f$ is. When $f(z)=0$, $|f(z)|=0$. For all $z\in T$ for which
$f(z)\neq0$, we have $f(z)=1/(1-z)$ and $|f(z)|=1/|(1-z)|\geq1/(1+|z|)=1/2$,
as $z$ is on the unit circle. This implies that $im(|f(z|)$ is disconnected
and hence $|f|$ is not continuous, a contradiction. Hence the graph must
satisfy Condition (L).

Thus we have proved the following:

\begin{proposition}
Let $E$ be an arbitrary graph and $K$ be any field. If the graph C*-algebra
$C^{\ast}(E)$ is a Zorn ring, then the graph $E$ satisfies Condition (L).
\end{proposition}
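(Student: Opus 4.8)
The plan is to argue by contradiction and push the Zorn-ring obstruction into a concrete commutative corner of $C^{\ast}(E)$. First I would isolate the two facts that require no fresh argument: (a) the equivalence of conditions (i) and (iii) of Theorem 1 is purely ring-theoretic, so if $C^{\ast}(E)$ is a Zorn ring then for every nonzero $a\in C^{\ast}(E)$ there is a nonzero $b$ with $bab=b$; and (b) exactly as in the step of the proof of Theorem 1 where (iii) is assumed, this property descends to any corner $uC^{\ast}(E)u$ cut out by an idempotent --- here a projection $p_{v}$ --- since multiplying $bab=b$ by $u$ on both sides turns $b$ into $ubu\in uC^{\ast}(E)u$.

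Next, suppose $E$ fails Condition (L), so there is a cycle $c$ without exits based at a vertex $v$. The crucial step is to identify the corner $p_{v}C^{\ast}(E)p_{v}$ with $C(T)$, the algebra of complex valued continuous functions on the unit circle $T$. Using the $\ast$-monomorphism $\varphi\colon L_{\mathbb{C}}(E)\to C^{\ast}(E)$, which restricts to a graded isomorphism onto its dense image $A$, together with continuity of multiplication in $C^{\ast}(E)=\bar{A}$, one has $p_{v}C^{\ast}(E)p_{v}=p_{v}\bar{A}p_{v}=\overline{p_{v}Ap_{v}}=\overline{\varphi(vL_{\mathbb{C}}(E)v)}$. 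Since $c$ has no exit, Lemma 1.5 of \cite{AMMS} gives $vL_{\mathbb{C}}(E)v\cong\mathbb{C}[x,x^{-1}]$ with $v\mapsto1$, $c\mapsto x$, $c^{\ast}\mapsto x^{-1}$, and the completion of the C*-norm on $\mathbb{C}[x,x^{-1}]$ is $C(T)$, with $x$ going to the coordinate function $z$. Composing these, I get an isomorphism $p_{v}C^{\ast}(E)p_{v}\cong C(T)$ sending $p_{v}\mapsto1$ and $s_{c}\mapsto z$.

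Now I would apply the Zorn property inside this corner to the element $p_{v}-s_{c}$, which is nonzero since it maps to $1-z\neq0$: there is a nonzero $b=p_{v}bp_{v}$ with $b(p_{v}-s_{c})b=b$. Transporting this to $C(T)$ and writing $f$ for the image of $b$, it becomes the functional equation $(1-z)f(z)^{2}=f(z)$ for all $z\in T$. To close the argument I would derive a contradiction from continuity of $f$: on the open set $U=\{z\in T:f(z)\neq0\}$ one must have $(1-z)f(z)=1$, hence $f(z)=1/(1-z)$ and $|f(z)|=1/|1-z|\geq1/2$ there (since $|z|=1$); we have $U\neq T$ because $1/(1-z)$ is unbounded near $z=1$ while $f$ is bounded, and $U\neq\emptyset$ since $b\neq0$; thus $|f|$ is a continuous function on the connected space $T$ whose image meets both $\{0\}$ and $[1/2,\infty)$ but avoids $(0,1/2)$, which is impossible. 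Hence $E$ satisfies Condition (L).

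The step I expect to be the main obstacle is the analytic identification in the second paragraph --- namely, that the norm-closure of $\varphi(vL_{\mathbb{C}}(E)v)$ taken \emph{inside} the ambient algebra $C^{\ast}(E)$ is genuinely the full C*-completion $C(T)$ of $\mathbb{C}[x,x^{-1}]$ and not a proper quotient of it. Establishing this is where continuity of multiplication (to see that $\overline{p_{v}Ap_{v}}$ really is the corner $p_{v}C^{\ast}(E)p_{v}$) combines with the uniqueness of the C*-norm on the commutative $\ast$-algebra $\mathbb{C}[x,x^{-1}]$; alternatively one can invoke Lemma 2.4 of \cite{aHR} together with Theorem 4.1(c) of \cite{BPRS}. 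Everything after that identification is elementary point-set topology.
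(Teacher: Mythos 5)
Your proposal is correct and follows essentially the same route as the paper: the same reduction to the corner $p_{v}C^{\ast}(E)p_{v}\cong C(T)$ via the dense copy of $L_{\mathbb{C}}(E)$, the same test element $p_{v}-s_{c}$ leading to $(1-z)f^{2}=f$, and the same connectedness contradiction from the gap in the range of $|f|$. The subtlety you flag about the closure of the corner being all of $C(T)$ is handled in the paper exactly as you suggest, by continuity of multiplication together with the cited results of an Huef--Raeburn and Bates--Pask--Raeburn--Szymanski.
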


What about the Converse of Theorem 3?. Suppose the graph $E$ satisfies
Condition (L). By the Cuntz-Krieger uniqueness theorem \cite{KPR}, every
two-sided ideal of $C^{\ast}(E)$ contains a vertex. By Theorem 1 above, $L_{%
\mathbb{C}
}(E)$ is a Zorn ring and since $L_{%
\mathbb{C}
}(E)$ is a dense subalgebra of $C^{\ast}(E)$, we conclude that $C^{\ast}(E) $
is the completion of a Zorn ring. Thus every one-sided ideal of $C^{\ast}(E)$
which is open contains a non-zero idempotent. I do not know if $C^{\ast}(E)$
itself is a Zorn ring in this case.

After this paper was written, I learnt from Gene Abrams and Mercedes Silas
Molina that, in the book on Leavitt path algebras that they are currently
writing, they have also independently proved the equivalence of condition (ii)
and (iv) of Theorem 1 using different methods.

\end{document}